\newtheorem{thm}{Theorem}[section]
\newtheorem{lemma}[thm]{Lemma}
\newcommand{\reals}{\mathbb{R}}
\newcommand{\qedsymbol}{\hfill$\square$}
\newenvironment{proof} {\par \noindent \textbf{Proof.}\quad}{\qedsymbol \par \bigskip \par}
\begin{document}
\vspace*{5cm}

\begin{center}
{\bf \LARGE ON LAPLACIAN LIKE ENERGY\\[16pt]
OF TREES}
\bigskip \bigskip \bigskip

{\large \sc Aleksandar Ili\'c \footnotemark[3]}

\smallskip
{\em Faculty of Sciences and Mathematics, University of Ni\v s, Vi\v segradska 33, 18 000 Ni\v s, Serbia} \\
e-mail: {\tt aleksandari@gmail.com}

\bigskip
{\large \sc Djordje Krtini\' c } \\
{\em \normalsize The Faculty of Mathematics, University of Belgrade, Studentski trg 16, 11000 Beograd, Serbia} \\
{\normalsize e-mail: { \tt georg@matf.bg.ac.yu }}

\bigskip
{\large \sc Milovan Ili\' c}

\smallskip
{\em Faculty of Information Technology, Trg republike 3, 11 000 Beograd, Serbia} \\
e-mail: {\tt ilic.milovan@gmail.com}

\bigskip
(Received \today)
\end{center}

\bigskip
\begin{abstract}
Let $G$ be a simple undirected $n$-vertex graph with the
characteristic polynomial of its Laplacian matrix~$L(G)$, $\det
(\lambda I - L (G))=\sum_{k = 0}^n (-1)^k c_k \lambda^{n - k}$.
Laplacian--like energy of a graph is newly proposed graph invariant,
defined as the sum of square roots of Laplacian eigenvalues. For bipartite graphs,
the Laplacian--like energy coincides with the recently defined incidence energy $IE (G)$ of a graph.
In [D. Stevanovi\' c, \textit{Laplacian--like energy of trees}, MATCH
Commun. Math. Comput. Chem. 61 (2009), 407--417.] the author
introduced a partial ordering of graphs based on Laplacian
coefficients. We point out that original proof was incorrect and illustrate the error on the example
using Laplacian Estrada index.
Furthermore, we found the inverse of Jacobian matrix with elements
representing derivatives of symmetric polynomials of order $n$, and
provide a corrected elementary proof of the fact: Let $G$ and $H$ be two
$n$-vertex graphs; if for Laplacian coefficients holds $c_k (G)
\leqslant c_k (H)$ for $k = 1, 2, \ldots, n - 1$, then $LEL (G)
\leqslant LEL (H)$. In addition, we generalize this theorem and
provide a necessary condition for functions that satisfy partial
ordering based on Laplacian coefficients.
\end{abstract}

\footnotetext[3] {Corresponding author. If possible, send your
correspondence via e-mail.}

\section{Introduction}

Let $G = (V, E)$ be a simple undirected graph with $n = |V|$
vertices and $m = |E|$ edges. The Laplacian polynomial $P
(G,\lambda)$ of $G$ is the characteristic polynomial of its
Laplacian matrix $L (G) = D (G) - A(G)$,
$$
P (G, \lambda) = \det (\lambda I_n - L (G)) = \sum_{k = 0}^n (-1)^k
c_k \lambda^{n - k}.
$$

The Laplacian matrix $L(G)$ has non-negative eigenvalues $\mu_1
\geqslant \mu_2 \geqslant \ldots \geqslant \mu_{n - 1} \geqslant
\mu_n = 0$~\cite{CvDS95}. From Viette's formulas,
\begin{equation}
\label{viette} c_k = \sigma_k (\mu_1, \mu_2, \ldots, \mu_{n - 1}) =
\sum_{\substack{I \subseteq \{1, 2, \ldots, n - 1\}, \\ |I| = k}} \
\prod_{i \in I} \mu_i
\end{equation}
is a symmetric polynomial of order $n - 1$. Detailed introduction to
graph Laplacians may be found in \cite{Mo04} and \cite{Me95}. In particular, we have
$c_0 = 1$, $c_n = 0$, $c_1 = 2m$, $c_{n - 1} = n \tau (G)$, where
$\tau (G)$ denotes the number of spanning trees of $G$. If $G$ is a
tree, coefficient $c_{n - 2}$ is equal to its Wiener index, which is
a sum of distances between all pairs of vertices
$$
c_{n - 2} (T) = W (T) = \sum_{u, v \in V} d (u, v).
$$
The Wiener index is considered as one of the most used topological
indices with high correlation with many physical and chemical
properties of molecular compounds. For recent results and
applications of Wiener index see \cite{DoEn01}. \vspace{0.2cm}

Recently, Zhou and Gutman \cite{ZhGu08} proved that the extreme
values of Laplacian coefficients among all $n$-vertex trees are
attained on one side by the path $P_n$, and on the other side by the
star $S_n$. In other words,
\begin{equation}
\label{eqn-ZhGu} c_k (S_n) \leqslant c_k (T) \leqslant c_k (P_n),
\qquad k = 0, 1, 2 \ldots, n,
\end{equation}
holds for all trees $T$ of order $n$. \vspace{0.2cm}

The Laplacian-like energy of graph $G$, LEL for short, is defined as
follows:
$$
LEL (G) = \sum_{k = 1}^{n-1} {\sqrt {\mu_k}}.
$$
This concept was introduced in \cite{LiLi08} where it was shown that
it has similar features as molecular graph energy \cite{Gu78} (for recent
results on graph energy see \cite{Gu01}). Various
properties of Laplacian coefficients and Laplacian-like energy on
trees and unicyclic graphs are investigated in
\cite{StIl09}--\cite{Zh09a}. It was demonstrated in \cite{StIlOD08}
that LEL can be used both in graph discriminating analysis and
correlating studies for modeling a variety of physical and chemical
properties and biological activities.  \vspace{0.2cm}

The signless Laplacian matrix of the graph $G$ is defined as $Q (G) = D (G) +
A(G)$. Matrix $Q (G)$ also has all real and nonnegative eigenvalues
$\mu'_1 \geqslant \mu'_2 \geqslant \ldots \geqslant \mu'_n \geqslant 0$, \cite{CvRS07}.
Motivated by Nikiforov's idea \cite{Ni07}, Jooyandeh et al. \cite{JoKiMi09} introduced the concept of
incidence energy $IE (G)$ of a graph $G$, defining it as the sum of the singular values of
the incidence matrix $I (G)$. It turns out that
$$
IE (G) = \sum_{k = 1}^n \sqrt{\mu'_i}.
$$
In particular, if $G$ is a bipartite graph, the spectra of $Q (G)$ and $L (G)$
coincide, and we have $IE (G) = LEL (G)$. This relation provides a new interpretation of the Laplacian--like energy,
and offers a new insight into its possible physical or chemical meaning.
Many mathematical properties of this quantity were established in \cite{GuKM09} and \cite{GuKiMiZh09}.
\vspace{0.2cm}

Stevanovi\' c in \cite{St09} showed a connection between LEL and
Laplacian coefficients.

\begin{thm}
\label{order} Let $G$ and $H$ be two $n$-vertex graphs. If $c_k (G)
\leqslant c_k (H)$ for $k = 1, 2, \ldots, n - 1$ then $LEL (G)
\leqslant LEL (H)$. Furthermore, if a strict inequality $c_k (G) <
c_k (H)$ holds for some $1 \leqslant k \leqslant n - 1$, then $LEL
(G) < LEL (H)$.
\end{thm}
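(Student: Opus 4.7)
The plan is to derive a Coulson-type integral representation of $LEL(G)$ in which the Laplacian coefficients $c_1,\dots,c_{n-1}$ enter as nonnegative weights inside a strictly increasing function, after which the theorem follows by comparing the integrands pointwise and integrating.

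First I would invoke the elementary identity
\[
|r| = \frac{1}{\pi}\int_0^{\infty}\log\!\left(1+\frac{r^2}{t^2}\right) dt, \qquad r\in\reals,
\]
which is easy to verify by the substitution $u=t/|r|$ and a single integration by parts. Applying it to $r=\sqrt{\mu_i}$ for $i=1,\ldots,n-1$ and summing gives
\[
LEL(G) = \frac{1}{\pi}\int_0^{\infty}\log\frac{\prod_{i=1}^{n-1}(t^2+\mu_i)}{t^{2(n-1)}}\, dt.
\]
The next step is to rewrite $\prod_i(t^2+\mu_i)$ in terms of the Laplacian coefficients. From $P(G,\lambda)=\lambda\prod_{i=1}^{n-1}(\lambda-\mu_i)=\sum_{k=0}^n(-1)^k c_k\lambda^{n-k}$, the substitution $\lambda=-t^2$ and elimination of signs yields $\prod_{i=1}^{n-1}(t^2+\mu_i)=\sum_{k=0}^{n-1}c_k\,t^{2(n-1-k)}$, and dividing by $t^{2(n-1)}$ produces
\[
LEL(G) = \frac{1}{\pi}\int_0^{\infty}\log\!\left(1+\sum_{k=1}^{n-1}\frac{c_k(G)}{t^{2k}}\right) dt.
\]

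Once this representation is in hand, the theorem is nearly a one-liner. Each $c_k$ carries a strictly positive weight $t^{-2k}$ on $(0,\infty)$, so the argument of $\log$ is strictly increasing in each $c_k$; hence the hypothesis $c_k(G)\le c_k(H)$ for all $k$ produces a pointwise inequality between the two integrands, which integrates to $LEL(G)\le LEL(H)$. A single strict inequality $c_{k_0}(G)<c_{k_0}(H)$ makes the pointwise inequality strict on all of $(0,\infty)$, and so the integrated inequality is strict as well, giving $LEL(G)<LEL(H)$.

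The only delicate point needing explicit verification is convergence of the integral: as $t\to\infty$ the integrand behaves like $c_1/t^2$ and is integrable, while as $t\to 0^{+}$ the argument of the $\log$ is dominated by $c_{k^{*}}/t^{2k^{*}}$ (with $k^{*}$ the largest index for which $c_{k^{*}}>0$), so the integrand blows up only logarithmically and remains integrable. I expect this convergence check to be the main technical hurdle, though a routine one. An alternative approach, which the paper announces as its own, is to invert the Jacobian of the map $(\mu_1,\dots,\mu_{n-1})\mapsto(c_1,\dots,c_{n-1})$ and then show $\partial LEL/\partial c_k\ge 0$ by the chain rule; that route requires more bookkeeping with symmetric functions but has the advantage of generalising cleanly to other spectral functionals, which is presumably the reason for the paper's choice.
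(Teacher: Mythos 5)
Your proposal is correct, and it takes a genuinely different route from the paper. The paper works through the Jacobian of the map from roots to coefficients: it inverts $J_F$ explicitly (Theorem 1.2), obtains $\frac{\partial LEL}{\partial c_k} = \frac{(-1)^{k-1}}{2}\sum_{i} \frac{\mu_i^{n-1-k}}{\omega'(\mu_i)\sqrt{\mu_i}}$ by the chain rule, determines the sign of this sum via a Lagrange-interpolation/Rolle argument applied to $f(x)=x^{n-k-3/2}$, and then must patch the case of repeated eigenvalues with a separate continuity argument on the closure of the domain (the most delicate and least rigorous part of the paper's proof). Your Coulson-type representation $LEL(G)=\frac{1}{\pi}\int_0^{\infty}\log\bigl(1+\sum_{k=1}^{n-1}c_k(G)t^{-2k}\bigr)\,dt$ is verified correctly (the normalization $\int_0^\infty\log(1+u^{-2})\,du=\pi$ and the sign bookkeeping under $\lambda=-t^2$ both check out, using $c_0=1$, $c_n=0$ and $c_k\geqslant 0$), the monotonicity and strictness arguments are immediate from pointwise comparison of integrands, and the convergence analysis at both endpoints is as routine as you predict. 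What your approach buys is substantial: it needs no assumption that the eigenvalues are distinct, so the entire boundary/continuity discussion in the paper's Section 5 becomes unnecessary, and it avoids the Jacobian inversion, the residue-calculus lemmas, and the Newton-identity recursion altogether. What the paper's approach buys is the general criterion of its Section 4 (the sign of $f^{(n-1)}$ determines monotonicity of $\sum f(\mu_i)$ in each $c_k$), which applies to spectral functionals that do not admit such a clean integral representation; your method is tied to functions expressible as integrals of $\log$ of a coefficient-linear expression, though for $LEL$ itself it is the cleaner and more robust proof.
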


We will show that this fact remains true, but the original author's
proof was not correct. Here, we will provide a corrected elementary
proof. \vspace{0.2cm}

Let us move to a more general setting. Consider the open set in
${\mathbf \reals}^n$
$$
{\cal X} = \left \{ (x_1, x_2, \ldots, x_n) : n > x_1
> x_2 > \ldots > x_{n - 1} > x_n > 0 \right \}.
$$

Let $\cal C$ denote the set of coefficients of polynomials having
roots in $\cal X$,
\begin{eqnarray*}
{\cal C} &=& \left \{ (c_1, c_2, \ldots, c_{n - 1}, c_n) : \exists
(x_1, x_2, \ldots, x_n) \in {\cal X} \right. \\
&\phantom{}& P (x) = x^n - c_1 x^{n - 1} + c_2 x^{n - 2} - \ldots +
(-1)^{n - 1} c_{n - 1} x + (-1)^n c_n \\
&\phantom{}& \left. \ \ \ \ \ \ \ = (x - x_1) (x - x_2) \cdot
\ldots \cdot (x - x_{n}) \right \}.
\end{eqnarray*}

Let $F: {\cal X} \rightarrow {\cal C}$ be the bijection defined by
Viette's formulas (\ref{viette}). This function represents
polynomial coefficients from $\cal C$ via the roots from $\cal X$.
Since every coordinate of the vector function $F$ is a polynomial,
we conclude that $F$ is a continuous function. Symmetric polynomials
are continuously differentiable functions and we have
\begin{equation}
\label{eq-partial} \frac {\partial c_k}{\partial x_j} =
\sum_{\substack{j \in I \subseteq \{1, 2, \ldots, n - 1 \}, \\ |I| =
k}} \ \ \prod_{i \in I \setminus \{ j \}} x_i.
\end{equation}

Let $F^{-1}: {\cal C} \rightarrow {\cal X}$ be the inverse function
of $F$. The Jacobian matrix of $F$ is

$$
J_F = \left[ \frac{\partial c_i }{\partial x_j} \right]_{i, j =
\overline{1, n}} = \left[
\begin{array}{cccccc}
\frac{\partial c_1}{\partial x_1} & \frac{\partial c_1}{\partial
x_2} & \cdots & \frac{\partial c_1}{\partial x_{n - 1}} & \frac{\partial c_1}{\partial x_n} \\
\frac{\partial c_2}{\partial x_1} & \frac{\partial c_2}{\partial
x_2} & \cdots & \frac{\partial c_2}{\partial x_{n - 1}} & \frac{\partial c_2}{\partial x_n} \\
\frac{\partial c_3}{\partial x_1} & \frac{\partial c_3}{\partial
x_2} & \cdots & \frac{\partial c_3}{\partial x_{n - 1}} & \frac{\partial c_3}{\partial x_n} \\
\vdots & \vdots & \ddots & \vdots & \vdots & \\
\frac{\partial c_{n}}{\partial x_1} & \frac{\partial c_{n}}{\partial
x_2} & \cdots & \frac{\partial c_{n}}{\partial x_{n - 1}} &
\frac{\partial c_{n}}{\partial x_n}
\end{array} \right]
$$

In \cite{St09} the author has following identity
\begin{equation}
\label{eqn-st} \frac{\partial LEL}{\partial c_k} = \sum_{i = 1}^n
\frac{\partial LEL}{\partial \mu_i} \cdot \frac{\partial
\mu_i}{\partial c_k} = \sum_{i = 1}^n \frac{\partial LEL / \partial
\mu_i}{\partial c_k /
\partial \mu_i}.
\end{equation}

This is incorrect -- we cannot use fact that the derivatives
$\frac{\partial \mu_i}{\partial c_k}$ and $\frac{\partial
c_k}{\partial \mu_i}$ are reciprocal, since $c_k$ is a function of
$n$ variables. We can illustrate this error on the recently
introduced molecular structure descriptor -- Laplacian Estrada index
\cite{Zh08}--\cite{Zh09}, defined as
$$
LEE (G) = \sum_{k = 1}^{n} e^{\mu_k}.
$$

Namely, the derivative $\frac{\partial LEE}{\partial c_k}$ is
strictly positive, since $\frac{\partial LEE}{\partial c_k} =
e^{\mu_i} > 0$. Therefore, according to the equation
(\ref{eqn-st}), this function is increasing in every coordinate
$c_k$. The following conclusion is obviously not true -- from
inequalities (\ref{eqn-ZhGu}) we have that $LEL (S_n) < LEL (P_n)$
and on the other side
$$
LEE (P_n) = \sum_{k = 1}^n e^{2 + 2 \cos \frac{k \pi}{n}} < e^n + 1
+ (n - 2) e = LEE (S_n),
$$
since for $n > 5$ we have $n e^4 < e^n$.\vspace{0.2cm}

This error can be fixed by finding the Jacobian matrix of the
inverse $F^{-1}$. Let
$$
\omega (x) = (x - x_1) (x - x_2) \cdot \ldots \cdot (x - x_n)
$$
be the polynomial with $n$ distinct real roots $x_1, x_2, \ldots,
x_n$. The following theorem is the main contribution of this paper.

\begin{thm}
\label{th-main} The Jacobian matrix of function $F^{-1}$ equals
\begin{eqnarray*}
J_{F^{-1}} &=& \left[ (-1)^{j - 1} \frac{x_i^{n - j}}{\omega' (x_i)}
\right]_{i, j = \overline{1, n}} \\
&=& \left[
\begin{array}{cccccc}
\frac{x_1^{n - 1}}{\omega' (x_1)} & -\frac{x_1^{n - 2}}{\omega'
(x_1)} & \cdots & (-1)^{n - 2} \frac{x_1}{\omega' (x_1)} &
(-1)^{n - 1} \frac{1}{\omega' (x_1)} \\
\frac{x_2^{n - 1}}{\omega' (x_2)} & -\frac{x_2^{n - 2}}{\omega'
(x_2)} & \cdots & (-1)^{n - 2} \frac{x_2}{\omega' (x_2)} &
(-1)^{n - 1} \frac{1}{\omega' (x_2)} \\
\frac{x_3^{n - 1}}{\omega' (x_3)} & - \frac{x_3^{n - 2}}{\omega'
(x_3)} & \cdots & (-1)^{n - 2} \frac{x_3}{\omega' (x_3)} & (-1)^{n -
1} \frac{1}{\omega' (x_3)} \\
\vdots & \vdots & \ddots & \vdots & \vdots & \\
\frac{x_{n}^{n - 1}}{\omega' (x_{n})} & -\frac{x_{n}^{n -
2}}{\omega' (x_{n})} & \cdots & (-1)^{n - 2} \frac{x_{n}}{\omega'
(x_{n})} & (-1)^{n - 1} \frac{1}{\omega' (x_{n})} \\
\end{array} \right]
\end{eqnarray*}
\end{thm}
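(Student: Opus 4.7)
My plan is to invoke the inverse function theorem. Since every entry of $F$ is a polynomial, $F$ is smooth on $\mathcal{X}$, so it suffices to exhibit an explicit left inverse $M$ to $J_F$ at each point of $\mathcal{X}$; by the chain rule this $M$ will then coincide with $J_{F^{-1}}$. Concretely, writing $M$ for the matrix stated in the theorem, I will verify directly the identity $M \cdot J_F = I_n$, which will also establish that $J_F$ is nonsingular throughout $\mathcal{X}$.

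Two ingredients will drive the calculation. First, extending formula (\ref{eq-partial}) from the Laplacian setting (with $n-1$ nonzero eigenvalues) to the present situation of $n$ distinct variables, the entries of the Jacobian read
$$\frac{\partial c_j}{\partial x_l} \;=\; \sigma_{j-1}(x_1, \ldots, \widehat{x_l}, \ldots, x_n),$$
where $\sigma_{j-1}$ denotes the elementary symmetric polynomial of degree $j-1$ and the hat denotes omission. Second, the standard factorization
$$\frac{\omega(x)}{x - x_l} \;=\; \prod_{m \neq l}(x - x_m) \;=\; \sum_{k=0}^{n-1}(-1)^k \sigma_k(x_1, \ldots, \widehat{x_l}, \ldots, x_n)\, x^{n-1-k}$$
writes the coefficients of this quotient polynomial in exactly the same form. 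Substituting $x = x_i$ and combining the two identities will give, for an arbitrary entry of the product,
$$(M \cdot J_F)_{i,l} \;=\; \sum_{j=1}^n (-1)^{j-1}\frac{x_i^{n-j}}{\omega'(x_i)}\, \sigma_{j-1}(\widehat{x_l}) \;=\; \frac{1}{\omega'(x_i)}\prod_{m \neq l}(x_i - x_m).$$

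From here the two cases are immediate. When $i = l$ the remaining product is exactly $\omega'(x_l)$, so the diagonal entries equal $1$. When $i \neq l$ the index $m = i$ is present in the product (since $i \neq l$), contributing the vanishing factor $x_i - x_i = 0$, so the off-diagonal entries equal $0$. Hence $M \cdot J_F = I_n$ and the theorem will follow. The hard part is really just spotting the correct manipulation: one must recognize that column $j$ of $M$ encodes, up to the normalizer $\omega'(x_i)$, the signed monomial $(-1)^{j-1}x^{n-j}$ appearing in the expansion of $\omega(x)/(x - x_l)$. Once this is seen, the product is essentially Lagrange interpolation in disguise and all that remains is routine bookkeeping of signs.
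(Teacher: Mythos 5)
Your proof is correct, and it follows the same core computation as the first half of the paper's proof: the identity
\[
\sum_{j = 1}^n (-1)^{j - 1} x_i^{n - j}\,\frac{\partial c_j}{\partial x_l} \;=\; \prod_{m \neq l} (x_i - x_m) \;=\; \omega_l(x_i),
\]
which equals $\omega'(x_i)$ when $i = l$ and vanishes otherwise, is exactly how the paper establishes $J_{F^{-1}} \cdot J_F = I_n$ (its equation (\ref{J_G J_F})), only phrased there via the polynomials $\omega_l(x)$ rather than via the expansion of $\omega(x)/(x - x_l)$ in elementary symmetric functions of the remaining roots. Where you genuinely diverge is in what happens next: the paper goes on to verify the opposite product $J_F \cdot J_{F^{-1}} = I_n$ separately, and that verification consumes most of its effort --- it requires Lemma \ref{imc} and Lemma \ref{imc1} (proved by contour integration and the residue theorem), the Newton--Girard identities (\ref{newton}), the recurrence (\ref{recurrent c_k}), and an induction on the rows. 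You bypass all of this by the standard linear-algebra fact that a left inverse of a square matrix is automatically its unique two-sided inverse, so $M \cdot J_F = I_n$ already forces $M = (J_F)^{-1}$, and the inverse function theorem (applicable since $F$ is a smooth bijection on the open set $\mathcal{X}$ and $J_F$ is now known to be nonsingular there) identifies $(J_F)^{-1}$ with $J_{F^{-1}}$. Your route is therefore strictly leaner and loses nothing; the paper's longer route has the side benefit of producing the auxiliary identities $s_0 = \cdots = s_{n-2} = 0$, $s_{n-1} = 1$, and (\ref{recurrent S}), but those are not needed for the theorem itself.
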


The paper is organized as follows. After a few preliminary results
involving Newton's identities and complex ana\-lysis in Section~2, we
prove Theorem \ref{th-main} in Section~3. In Section~4 we provide a
sufficient condition for establishing a partial ordering of graphs
based on Laplacian coefficients and finally in Section~5 we give a
corrected proof of Theorem \ref{order}.

\section{Preliminary results}

Before presenting the proof of Theorem \ref{th-main}, we need to prove some useful lemmas.

\begin{lemma}
\label{le-recurrent} For every $2 \leqslant i \leqslant n$ and $1
\leqslant j \leqslant n$ holds:
\begin{equation}
\label{recurrent c_k} \frac{\partial c_i}{\partial x_j} = c_{i - 1}
- x_j \cdot \frac{\partial c_{i - 1}}{\partial x_j}.
\end{equation}
\end{lemma}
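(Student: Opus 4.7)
The plan is to give a direct combinatorial proof via the explicit elementary-symmetric description of $c_k$. The key observation, obtained from the $n$-variable analogue of (\ref{eq-partial}), is that the partial derivative $\partial c_i/\partial x_j$ coincides with the $(i-1)$-th elementary symmetric polynomial $\sigma_{i-1}^{(j)}$ in the $n-1$ variables $\{x_k : k \neq j\}$, since removing the factor $x_j$ from each monomial of $c_i$ that contains $x_j$ produces precisely the monomials of $\sigma_{i-1}$ over the remaining variables.

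Given this, I would split the sum defining $c_{i-1} = \sigma_{i-1}(x_1, \ldots, x_n)$ according to whether the index set $I \subseteq \{1, \ldots, n\}$ of size $i-1$ contains $j$ or not. The subsets not containing $j$ contribute $\sigma_{i-1}^{(j)}$, while a subset containing $j$ can be written uniquely as $\{j\} \cup I'$ with $|I'| = i-2$ and $j \notin I'$, contributing $x_j \cdot \sigma_{i-2}^{(j)}$. Therefore
$$c_{i-1} \;=\; \sigma_{i-1}^{(j)} + x_j\,\sigma_{i-2}^{(j)} \;=\; \frac{\partial c_i}{\partial x_j} + x_j\,\frac{\partial c_{i-1}}{\partial x_j},$$
and rearranging yields exactly the claimed identity.

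As an alternative, one can exploit the polynomial identity $(x - x_j)\,\partial_{x_j}\omega(x) + \omega(x) = 0$, which follows by differentiating $\omega(x) = \prod_{k=1}^n(x - x_k)$ with respect to the parameter $x_j$ and rewriting the resulting factor as $-\omega(x)/(x - x_j)$. Expanding $\omega(x) = \sum_{i=0}^n (-1)^i c_i x^{n-i}$, differentiating termwise, and equating the coefficient of $x^{n-i}$ on both sides of the identity recovers the same recurrence after routine sign bookkeeping. I do not foresee any substantive obstacle: the statement is a first-order linear recursion between consecutive elementary symmetric polynomials, and the only mildly delicate point is the base case $i = 2$, where the recurrence reduces to $\partial c_2/\partial x_j = c_1 - x_j = \sum_{k \neq j} x_k$ and is verified directly from $\partial c_1/\partial x_j = 1$.
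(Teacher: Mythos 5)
Your main argument is correct and is essentially the paper's own proof: identify $\partial c_i/\partial x_j$ with the $(i-1)$-th elementary symmetric polynomial in the variables other than $x_j$, and split the $(i-1)$-subsets defining $c_{i-1}$ according to whether they contain $j$. (Your alternative via $(x-x_j)\,\partial_{x_j}\omega(x)+\omega(x)=0$ also checks out, but the primary route matches the paper, just spelled out in more detail.)
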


\begin{proof}
Notice that from the equation (\ref{eq-partial}), $\partial c_i /
\partial x_j$ is a polynomial of degree $i - 1$ in variables $x_1,
x_2, \ldots, x_{j - 1}, x_{j + 1}, \ldots, x_n$. On the right side,
we have the sum of product of all $(i - 1)$-tuples that does not
contain $x_j$. We can derive this set if we consider all $(i -
1)$-tuples and subtract those that contain $x_j$ -- which gives the
relation (\ref{recurrent c_k}).
\end{proof}

\begin{lemma}
\label{imc} Let $\omega (x)$ be a polynomial of real coefficients of
degree $n$ having distinct real roots $x_1, x_2, \ldots, x_n$. For
any nonnegative integer $0 \leqslant k \leqslant n - 2$ holds:
$$
\sum_{i = 1}^n \frac {x_i^k}{\omega' (x_i)} = 0.
$$
\end{lemma}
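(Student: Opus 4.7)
The plan is to recognize this identity as the assertion that the leading coefficient of a certain Lagrange interpolating polynomial vanishes, so it follows immediately from uniqueness of polynomial interpolation.

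First I would recall the Lagrange interpolation formula: given the $n$ distinct nodes $x_1,\ldots,x_n$ and any prescribed values $y_1,\ldots,y_n$, the unique polynomial $L(x)$ of degree at most $n-1$ with $L(x_i)=y_i$ is
$$
L(x)=\sum_{i=1}^{n} y_i \prod_{j\ne i}\frac{x-x_j}{x_i-x_j}.
$$
Since $\omega'(x_i)=\prod_{j\ne i}(x_i-x_j)$, this can be rewritten as
$$
L(x)=\sum_{i=1}^{n} \frac{y_i}{\omega'(x_i)}\prod_{j\ne i}(x-x_j).
$$

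Next I would specialize to $y_i=x_i^k$. Because $k\le n-2<n-1$, the monomial $x^k$ is itself a polynomial of degree at most $n-1$ that takes the correct values at the nodes, so by uniqueness $L(x)\equiv x^k$. Now I would compare the coefficients of $x^{n-1}$ on both sides. On the right-hand side, each product $\prod_{j\ne i}(x-x_j)$ contributes $1$ to the coefficient of $x^{n-1}$, so that coefficient equals $\sum_{i=1}^{n} x_i^k/\omega'(x_i)$. On the left-hand side, $x^k$ with $k\le n-2$ has coefficient $0$ at $x^{n-1}$. Equating yields the desired identity.

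The main step is really just recognizing the sum as the leading coefficient of an interpolant; there is no computational obstacle. An equivalent residue-theoretic viewpoint is that $x_i^k/\omega'(x_i)$ is the residue of $f(z)=z^k/\omega(z)$ at $z=x_i$, and for $k\le n-2$ the function $f$ decays like $z^{k-n}=O(1/z^2)$ at infinity, forcing the sum of all finite residues to vanish; but the Lagrange interpolation argument above is entirely elementary and fits the style of the paper.
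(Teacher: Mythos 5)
Your proof is correct, but it takes a genuinely different route from the paper's. The paper proves Lemma \ref{imc} by complex analysis: it considers $f(z)=z^k/\omega(z)$, notes $|f(z)|\leqslant C/|z|^2$ for large $|z|$, and compares the vanishing limit of $\int_{S_R}f(z)\,dz$ as $R\to\infty$ with the Cauchy residue theorem, the residue of $f$ at the simple pole $x_i$ being $x_i^k/\omega'(x_i)$. Your argument instead identifies the sum as the coefficient of $x^{n-1}$ in the Lagrange interpolant of $x^k$ at the nodes $x_1,\ldots,x_n$, which by uniqueness is $x^k$ itself and hence has vanishing leading coefficient when $k\leqslant n-2$. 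This is entirely elementary (no contour integration needed), and it is in fact the very technique the paper deploys later, in Section~4, to determine the sign of $\sum_i f(x_i)/\omega'(x_i)$ for general smooth $f$; so your proof anticipates machinery the authors themselves introduce. It also extends with no extra work to the companion Lemma \ref{imc1} (take $k=n-1$: the interpolant of $x^{n-1}$ is $x^{n-1}$, whose leading coefficient is $1$), whereas the paper there needs a separate auxiliary function $z^{n-1}/\omega(z)-1/z$. The residue approach buys nothing extra here beyond generality to non-real roots, which the lemma does not require.
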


\begin{proof}
Consider the rational function $f (z)$ of complex variable $z$,
defined as follows:
$$
f (z) = \frac {z^k}{\omega (z)}.
$$

By comparing degrees it holds $|f (z)| \leqslant \frac{C}{|z|^2}$
for some constant $C$ and large enough $|z|$. So, for the integral
over the circumference $S_R$ of radius $R$ and center at the origin
we have
$$
|I| = \left | \int_{S_R} f (z) dz \right | \leqslant \int_{S_R}
\left | f (z) \right | dz \leqslant \int_{S_R} \frac{C}{|z|^2} dz =
\frac {2 \pi C}{R},
$$
for large enough $R$. On the other hand, using the Cauchy Residue
theorem \cite{Ru76} we have:
$$
I = \int_{S_R} f (z) dz = 2 \pi \imath \sum_{i = 1}^n Res (x_i, f) =
2 \pi \imath \sum_{i = 1}^n \frac{x_i^k}{\omega (x_i)}.
$$
Therefore, for large enough radius $R$, we have inequality:
$$
\left | \sum_{i = 1}^n \frac{x_i^k}{\omega (x_i)} \right | \leqslant
\frac{C}{R},
$$
and taking the limit as $R \rightarrow \infty$ we obtain that the
sum equals zero.
\end{proof}

\begin{lemma}
\label{imc1} Let $\omega (x)$ be a polynomial of real coefficients
of degree $n$ having distinct real roots $x_1, x_2, \ldots, x_n$. It
holds:
$$
\sum_{i = 1}^n \frac {x_i^{n - 1}}{\omega' (x_i)} = 1.
$$
\end{lemma}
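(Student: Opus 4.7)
The plan is to carry out the contour-integral argument from Lemma \ref{imc} for the function $f(z) = z^{n-1}/\omega(z)$, with one essential modification. Since $\omega$ has degree $n$ and leading coefficient $1$, one now only has $|f(z)| \sim 1/|z|$ at infinity instead of the faster $O(1/|z|^2)$ decay used in Lemma \ref{imc}. Consequently the integral over a large circle will \emph{not} vanish in the limit, but instead converge to a nonzero value that supplies the ``$1$'' on the right-hand side of the identity.

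Concretely, I would first separate off the leading $1/z$ behaviour: by polynomial long division, or equivalently by bounding $|z^n - \omega(z)|/|z\,\omega(z)|$ via a degree count, write
$$
f(z) = \frac{1}{z} + g(z), \qquad g(z) = O\!\left(\frac{1}{|z|^2}\right) \text{ as } |z| \to \infty.
$$
Integrating over the circle $S_R$ of radius $R$ large enough to enclose every $x_i$, the $1/z$ summand contributes exactly $2\pi \imath$, while the $g$-term is bounded in modulus by $2\pi R \cdot C/R^2 = 2\pi C/R \to 0$. Therefore $\int_{S_R} f(z)\, dz \to 2\pi \imath$ as $R \to \infty$.

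On the other hand, the Cauchy Residue Theorem applied just as in the proof of Lemma \ref{imc} gives
$$
\int_{S_R} f(z)\, dz = 2\pi \imath \sum_{i = 1}^n Res (x_i, f) = 2\pi \imath \sum_{i=1}^n \frac{x_i^{n-1}}{\omega'(x_i)},
$$
since each $x_i$ is a simple pole with residue $x_i^{n-1}/\omega'(x_i)$. Equating the two limits and dividing by $2\pi \imath$ yields the claimed identity. The only genuine obstacle is isolating the $1/z$ asymptotic cleanly before estimating the remainder; every other step is a routine adaptation of the preceding lemma. As an even shorter alternative, one could apply the Lagrange interpolation formula to $p(x) = x^{n-1}$ (which has degree at most $n-1$, so interpolation is exact) and read off the coefficient of $x^{n-1}$ on both sides, but the contour argument has the advantage of keeping the exposition uniform with Lemma \ref{imc}.
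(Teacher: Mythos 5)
Your argument is correct and is essentially identical to the paper's: the paper also subtracts $1/z$ from $z^{n-1}/\omega(z)$, notes the difference $(z^n-\omega(z))/(z\,\omega(z))$ is $O(1/|z|^2)$, and uses $\int_{S_R}dz/z = 2\pi\imath$ together with the residue computation from Lemma \ref{imc}. The Lagrange-interpolation alternative you mention in passing is also valid, but your main proof matches the paper's route.
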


\begin{proof}
Consider the function
$$
f (z) = \frac{z^{n - 1}}{\omega (z)} - \frac{1}{z} = \frac {z^n -
\omega (z)}{z \omega (z)}.
$$
Similarly, we get that $|f (z)| \leqslant \frac{C}{|z|^2}$. Using
the additive property of integrals and
$$
\int_{S_R} \frac{dz}{z} = \int_{0}^{2 \pi} \frac {\imath R e^{\imath
\phi} d \phi}{R e^{\imath \phi}} = 2 \pi \imath,
$$
the relation follows.
\end{proof}

For $i \geqslant 1$ denote by $s_i (x_1, x_2, \ldots, x_n)$ the
$i$-th power sum
$$
s_i = \sum_{k = 1}^n \frac{x_k^i}{\omega' (x_k)}.
$$

The Newton identities \cite{St01} (also known as Newton-Girard
formulae) give relations between power sums and elementary symmetric
polynomials.
\begin{equation}
\label{newton} k \cdot c_k = \sum_{i = 1}^k (-1)^{i - 1} c_{k - i}
\cdot s_i.
\end{equation}

For $k > n$, by definition is $c_k = 0$. We already proved that
$s_0 = s_1 = \ldots = s_{n - 2} = 0$ and $s_{n - 1} = 1$. For $k =
2, \ldots, n + 1$, holds:
\begin{eqnarray*}
(n + k - 1) c_{n + k - 1} &=& c_{n + k - 2} s_1 - c_{n + k - 3} s_2 + \ldots + (-1)^{n - 1} c_{k + 1} s_{n - 2} \\
&\phantom{}& + (-1)^{n - 2} c_{k} s_{n - 1} + (-1)^{n - 1} c_{k - 1} s_n + \ldots + (-1)^{n + k - 2} c_0 s_{n + k - 1},
\end{eqnarray*}
or equivalently
\begin{equation}
\label{recurrent S} c_k \cdot s_{n - 1} - c_{k - 1} \cdot s_{n} +
c_{k - 2} \cdot s_{n + 1} - \ldots + (-1)^{k - 1} \cdot c_1 \cdot
s_{n + k - 2} + (-1)^k \cdot s_{n + k - 1} = 0.
\end{equation}

\section{The proof of Theorem \ref{th-main}}
\vspace{0.2cm}

The theorem states that
$$
J_{F} \cdot J_{F^{-1}} = J_{F^{-1}} \cdot J_{F} = I_n = [\delta_{i, j}]_{i, j =\overline{1, n}},
$$
where $I_n$ is identity matrix of order $n$.
Therefore, we have to prove the following equalities for every
$1 \leqslant i, j, \leqslant n$,
\begin{equation}
\label{J_G J_F} \sum_{k = 1}^n (-1)^{k - 1} \frac{x_i^{n -
k}}{\omega' (x_i)} \cdot \frac{\partial c_k }{\partial x_j} =
\delta_{i, j}
\end{equation}
and
\begin{equation}
\label{J_F J_G} \sum_{k = 1}^n \frac{\partial c_i }{\partial x_k}
\cdot \frac{x_k^{n - j}}{\omega' (x_k)} = (-1)^{j - 1} \cdot
\delta_{i, j}.
\end{equation}

Define the polynomials $\omega_i (x)$, $i = 1, 2, \ldots, n$ as
$$
\omega_i (x) = (x - x_1) (x - x_2) \cdot \ldots \cdot (x - x_{i -
1}) (x - x_{i + 1}) \cdot \ldots \cdot (x - x_n).
$$

By the derivation of the product, we get
$$
\omega' (x) = \sum_{i = 1}^n \omega_i (x).
$$

The proof of (\ref{J_G J_F}) follows from the identity (case $i = j$):
\begin{eqnarray*}
\omega' (x_i) &=& \omega_i (x_i) \\
&=& (x_i - x_1)(x_i - x_2) \cdot \ldots \cdot (x_i -
x_{i - 1}) (x_i - x_{i + 1}) \cdot \ldots \cdot (x_i - x_n) \\
&=& \frac{\partial c_1}{\partial x_i} \cdot x_i^{n - 1} -
\frac{\partial c_2}{\partial x_i} \cdot x_i^{n - 2} + \frac{\partial
c_3}{\partial x_i} \cdot x_i^{n - 3} - \ldots + (-1)^{n - 2}
\frac{\partial c_{n - 1}}{\partial x_i} \cdot x_i + (-1)^{n - 1}
\frac{\partial c_n}{\partial x_i}.
\end{eqnarray*}

For $i \neq j$, similarly we get:
$$
0 = \omega_j (x_i) = \sum_{k = 1}^n (-1)^{k - 1} x_i^{n -
k} \cdot \frac{\partial c_k }{\partial x_j}.
$$

Since the first row in $J_F$ equals $(1, 1, 1, \ldots, 1)$, from
Lemma \ref{imc} and Lemma \ref{imc1} follows that first row in the matrix product
$J_F \cdot J_{F^{-1}}$ is $(1, 0, 0, \ldots, 0)$. For the rows $i = 2, 3,
\ldots, n$ we proceed by mathematical induction using the recurrent
formula (\ref{recurrent c_k}).
\begin{eqnarray*}
\sum_{k = 1}^n \frac{\partial c_i }{\partial x_k} \cdot \frac{x_k^{n
- j}}{\omega' (x_k)} &=& \sum_{k = 1}^n \left ( c_{i - 1} - x_k
\cdot \frac{\partial c_{i - 1} }{\partial x_k} \right ) \cdot
\frac{x_k^{n - j}}{\omega' (x_k)} \\
&=& c_{i - 1} \cdot \sum_{k = 1}^n \frac{x_k^{n - j}}{\omega' (x_k)}
- \sum_{k = 1}^n \frac{\partial c_{i - 1} }{\partial x_k} \cdot
\frac{x_k^{n - j + 1}}{\omega' (x_k)}
\end{eqnarray*}

For $j > 1$, we have that the sum equals $- (-1)^{j - 1} \delta_{i -
1, j - 1} = (-1)^{j} \delta_{i, j}$. The case $j = 1$, we have to consider independently.
\begin{eqnarray*}
\sum_{k = 1}^n \frac{\partial c_i }{\partial x_k} \cdot \frac{x_k^{n
- 1}}{\omega' (x_k)} &=& c_{i - 1} s_{n - 1} - \sum_{k = 1}^n \frac{\partial c_{i - 1} }{\partial
x_k} \cdot \frac{x_k^{n}}{\omega' (x_k)} \\
&=& c_{i - 1} s_{n - 1} - c_{i - 2} s_n + \sum_{k = 1}^n \frac{\partial c_{i - 2} }{\partial
x_k} \cdot \frac{x_k^{n + 1}}{\omega' (x_k)}.
\end{eqnarray*}

Applying this substitutions $i$ times and using the identity
(\ref{recurrent S}) we get that the first element in the $i$-th row
equals zero. This finishes the inductive proof of (\ref{J_F J_G}).

\section{Partial ordering of graphs based on Laplacian coefficients}
\vspace{0.2cm}

First, we will give some general method for deriving the sign of special summation
that we need. Let $f$ be any smooth function and let
$$
P (x) = c_0 x^{n - 1} + c_1 x^{n - 2} + \ldots + c_{n - 2} x + c_{n
- 1}
$$
be the interpolating polynomial for $f$ at the points $x_1, x_2,
\ldots, x_n$ of degree $n - 1$. Using the Lagrange interpolating
theorem, we have:
$$
P (x) = \sum_{i = 1}^n f (x_i) \cdot \frac{(x - x_1) (x - x_2) \cdot
\ldots \cdot (x - x_{i - 1})(x - x_{i + 1}) \cdot \ldots \cdot (x -
x_n)}{\omega' (x_i)}.
$$

The sum
$$
S = \sum_{i = 1}^n \frac{f (x_i)}{\omega' (x_i)} = \sum_{i = 1}^n
\frac{P (x_i)}{\omega' (x_i)}
$$
equals to the leading coefficient of $P$. Using
Taylor expansion of $P (x)$, we have
$$
S = \sum_{i = 1}^n \frac{f (x_i)}{\omega' (x_i)} = c_1 = \frac{1}{(n
- 1)!} P^{(n - 1)} (x).
$$

Since $f (x) - P (x)$ has $n$ real roots, we conclude by Rolle's
theorem that $f^{(n - 1)}(x) - P^{(n - 1)}(x)$ has at least one real
root. Therefore, for some $\xi$ in the interval containing all
$x_i$'s we have
$$
S = \frac{1}{(n - 1)!}f^{(n - 1)}(\xi).
$$

If the function $f^{(n - 1)}(x)$ preserves the sign, we can uniquely
determine the sign of sum $S$.

\section{The proof of Theorem \ref{order}}

Consider the open set in ${\mathbf \reals}^{n-1}$
$$
{\cal M} = \left \{ (\mu_1, \mu_2, \ldots, \mu_{n - 1}) : n > \mu_1
> \mu_2 > \ldots > \mu_{n - 1} > 0 \right \}.
$$

Let $\cal C$ denote the set of coefficients of polynomials having
roots in $\cal M$,
\begin{eqnarray*}
{\cal C} &=& \left \{ (c_1, c_2, \ldots, c_{n - 1}) : (\exists
(\mu_1, \mu_2, \ldots, \mu_{n - 1}) \in {\cal M} \right. \\
&\phantom{}&  P (x) = x^{n - 1} - c_1 x^{n - 1} + c_2 x^{n - 2} +
\ldots + (-1)^{n - 1} c_{n - 1} \\
&\phantom{}& \left. \ \ \ \ \ \ \ = (x - \mu_1) (x - \mu_2) \ldots (x
- \mu_{n - 1}) \right \}.
\end{eqnarray*}

The Laplacian--like energy function $LEL: {\cal M} \rightarrow {\bf
\reals}$, defined by
$$
LEL(\mu_1, \mu_2, \ldots, \mu_{n - 1}) = \sqrt{\mu_1} + \sqrt{\mu_2}
+ \ldots + \sqrt{\mu_{n - 1}} + 0,
$$
may then be represented as an implicit function of coefficients from
${\cal C}$. By the chain rule, for arbitrary $k$, $1 \leqslant k
\leqslant n$, we have
$$
\frac{\partial LEL}{\partial c_k} = \sum_{i = 1}^n \frac{\partial
LEL}{\partial \mu_i} \cdot \frac{\partial \mu_i}{\partial c_k} =
\sum_{i = 1}^n \frac{1}{2 \sqrt{\mu_i}} \cdot \frac{\partial
\mu_i}{\partial c_k}.
$$

Therefore,
$$
\frac{\partial LEL}{\partial c_k} = \frac{1}{2} \sum_{i = 1}^n
\frac{1}{\sqrt{\mu_i}} \cdot (-1)^{k - 1} \cdot \frac{\mu_i^{n - 1 -
k}}{\omega' (\mu_i)} = \frac{(-1)^{k - 1}}{2} \sum_{i = 1}^n \frac{
\mu_i^{n - 1 - k}}{\omega' (\mu_i) \sqrt{\mu_i}}.
$$

Let $f (x) = x^{n - k - \frac{3}{2}}$ and according to the
previous consideration we have to examine the sign of $(n - 2)$-th
derivative of~$f$.
$$
f^{(n - 2)} (x) = \left (x^{n - k - \frac{3}{2}} \right)^{(n - 1)} =
\left(n - k - \frac{3}{2}\right)\left(n - k - \frac{5}{2}\right)
\cdot \ldots \cdot \left(\frac{3}{2} - k \right) x^{1/2 - k}.
$$

This means that the sign of $f^{(n - 2)} (x)$ is equals to $(-1)^{k -
1}$, and finally
$$
\frac{\partial LEL}{\partial c_k} > 0,
$$
and the Laplacian--like energy function is strictly increasing on
$\cal C$ in each coordinate. \vspace{0.2cm}

So far we have dealt with the case of distinct eigenvalues only. The
remaining step is to consider the closure of ${\cal M}$
$$
{\cal \overline{M}} = \left \{ (\mu_1, \mu_2, \ldots, \mu_{n - 1}) :
n \geqslant \mu_1 \geqslant \mu_2 \geqslant \ldots \geqslant \mu_{n
- 1} \geqslant 0 \right \}.
$$

Its image under equations (\ref{viette}) is the set $\cal
\overline{C}$ of coefficients of polynomials having roots in $\cal
\overline{M}$. The previous proof on $\overline{\mathcal{M}}$ can
not be applied since $F$ is not injection - but, it can be done
simply using the continuity. \vspace{0.2cm}

Suppose that we have two points $x_1$ and $x_2$ in
$\overline{\mathcal{M}} \setminus \mathcal{M}$ such that $c_k (x_1)
< c_k (x_2)$ for some $k \in \{1, 2, \ldots, n - 1\}$, $c_j (x_1) = c_j (x_2)$ for $j\neq k$
and $LEL (x_1) > LEL (x_2)$. Functions
$c_k$ and $LEL$ are continuous, so we can find disjoint balls $B_1
(x_1, \varepsilon) = \{x\mid \|x - x_1\|<\varepsilon\}$ and
$B_2 (x_2, \varepsilon) = \{x\mid \|x-x_2\|<\varepsilon\}$ such that $c_k(x) <
c_k(y)$ and $LEL(x) > LEL(y)$ for all $x \in B_1(x_1,\varepsilon)$
and $y \in B_2(x_2,\varepsilon)$. The set $\overline{\mathcal{M}}
\setminus \mathcal{M}$ has empty interior, so there exists points
$x' \in B_1 (x_1,\varepsilon)\cap\mathcal{M}$ and $y' \in
B_1(x_1,\varepsilon)\cap\mathcal{M}$ such that $c_k(x') < c_k(y')$
and $LEL(x') > LEL(y')$. Surfaces $c_j (x) = C_j$ (for $j \neq k$ and
some constants $C_j$) can be arbitrarily close to the surfaces passing
through points $x_1$ and $x_2$ by continuity. Therefore, it follows
that such surfaces have common points with both $B_1(x_1,\varepsilon)$
and $B_2(x_2,\varepsilon)$ -- so we can choose $x'$ and $y'$ such
that $c_j(x') = c_j(y')$ for every $j \neq k$. \vspace{0.2cm}

This is a contradiction with the first part of the proof, since $x', y' \in {\mathcal M}$. Finally,
the Laplacian-like energy function LEL, which is strictly increasing
on $\cal C$ in each coordinate, must be strictly increasing on $\cal
\overline{C}$ as well.

\vspace{0.2cm}
\bigskip {\bf Acknowledgement. }  This work was supported by the
Research Grant 144007 of the Serbian Ministry of Science and
Technological Development. The authors are grateful to professor Ivan Gutman for his generous help
and valuable comments that improved the paper.


\begin{thebibliography}{99}

\bibitem{CvDS95}
    D. Cvetkovi\' c, M. Doob, H. Sachs, \textit{Spectra of graphs -- Theory and Application}, 3rd edition,
    Johann Ambrosius Barth Verlag, 1995.

\bibitem{Mo04}
    B. Mohar, \textit{Graph Laplacians}, in: L. W. Beineke, R. J. Wilson (Eds.), Topics in Algebraic Graph Theory,
    Cambridge University Press, Cambridge, 2004.

\bibitem{Me95}
    R. Merris, \textit{A survey of graph Laplacians}, Linear Multilin. Algebra 39 (1995)
    19--31.

\bibitem{DoEn01}
    A. Dobrynin, R. Entringer, I. Gutman, \textit{Wiener index of
    trees: theory and applications}, Acta Appl. Math. 66 (2001) 211--249.

\bibitem{ZhGu08}
    B. Zhou, I. Gutman, \textit{A connection between ordinary and Laplacian spectra of
    bipartite graphs}, Linear Multilin. Algebra 56 (2008) 305--310.

\bibitem{LiLi08}
    J. Liu, B. Liu, \textit{A Laplacian--Energy--Like Invariant of a graph},
    MATCH Commun. Math. Comput. Chem. 59 (2008) 355--372.

\bibitem{Gu78}
    I. Gutman, \textit{The energy of a graph}, Ber. Math. Stat. Sekt.
    Forschungszent. Graz 103 (1978) 1--22.

\bibitem{Gu01}
    I. Gutman, \textit{The energy of a graph: old and new results}, in: A.
    Betten, A. Kohnert, R. Laue, A. Wassermann (Eds.), Algebraic
    Combinatorics and Applications, Springer-Verlag, Berlin, 2001, 196--211.

\bibitem{StIl09}
    D. Stevanovi\' c, A. Ili\' c, \textit{On the Laplacian coefficients
    of unicyclic graphs}, Linear Algebra Appl. 430 (2009) 2290--2300.

\bibitem{St09}
    D. Stevanovi\' c, \textit{Laplacian--like energy of trees},
    MATCH Commun. Math. Comput. Chem. 61 (2009) 407--417.

\bibitem{IlIS09}
    A. Ili\' c, A. Ili\' c, D. Stevanovi\' c, \textit{On the Wiener index and Laplacian coefficients
    of graphs with given diameter or radius}, MATCH Commun. Math. Comput. Chem., in press.

\bibitem{StIlOD08}
    D. Stevanovi\' c, A. Ili\' c, C. Onisor, M. Diudea,
    \textit{LEL -- a Newly Designed Molecular Descriptor}, Acta Chim. Slov., in press.

\bibitem{Mo07}
    B. Mohar, \textit{On the Laplacian coefficients of acyclic graphs},
    Linear Algebra Appl. 722 (2007) 736--741.

\bibitem{GuZh06}
    I. Gutman, B. Zhou, \textit{Laplacian energy of a graph},
    Linear Algebra Appl. 414 (2006) 29--37.

\bibitem{ZhGu07}
    B. Zhou, I. Gutman, \textit{On Laplacian energy of graphs}, MATCH Commun. Math.
    Comput. Chem. 57 (2007) 211--220.

\bibitem{LiLiRaGu09}
    J. Liu, B. Liu, S. Radenkovi\' c, I. Gutman, \textit{Minimal LEL--equienergetic graphs},
    MATCH Commun. Math. Comput. Chem. 61 (2009) 471--478.

\bibitem{MiFuRaGu09}
    O. Miljkovi\' c, B. Furtula, S. Radenkovi\' c, I. Gutman,
    \textit{Equienergetic and Almost-Equienergetic Trees},
    MATCH Commun. Math. Comput. Chem. 61 (2009) 451--461.

\bibitem{Zh09a}
    B. Zhou, \textit{New upper bounds for Laplacian energy}, MATCH Commun. Math.
    Comput. Chem. 62 (2009) 553--560.

\bibitem{CvRS07}
    D. Cvetkovi\' c, P. Rowlinson, S. Simi\' c, \textit{Signless Laplacians of finite graphs},
    Linear Algebra Appl. 423 (2007) 155--171.

\bibitem{Ni07}
    V. Nikiforov, \textit{The energy of graphs and matrices}, J. Math. Anal. Appl. 326 (2007)
    1472--1475.

\bibitem{JoKiMi09}
    M. R. Jooyandeh, D. Kiani, M. Mirzakhah, \textit{Incidence energy of a graph},
    MATCH Commun. Math. Comput. Chem. 62 (2009) 561--572.

\bibitem{GuKM09}
    I. Gutman, D. Kiani, M. Mirzakhah, \textit{On incidence energy of graphs},
    MATCH Commun. Math. Comput. Chem. 62 (2009) 573--580.

\bibitem{GuKiMiZh09}
    I. Gutman, D. Kiani, M. Mirzakhah, B. Zhou, \textit{On Incidence Energy of a Graph},
    Linear Algebra Appl., in press.

\bibitem{Zh08}
    B. Zhou, \textit{On sum of powers of the Laplacian eigenvalues of graphs},
    Linear Algebra Appl. 429 (2008) 2239--2246.

\bibitem{LiShCh09}
    J. Li, W. C. Shiu, A. Chang, \textit{On the Laplacian Estrada Index of a Graph},
    Appl. Anal. Discrete Math. 3 (2009) 147--156.

\bibitem{Zh09}
    B. Zhou, \textit{On sum of powers of Laplacian eigenvalues and Laplacian Estrada index},
    MATCH Commun. Math. Comput. Chem. 62 (2009) 611--619.

\bibitem{Ru76}
    W. Rudin, \textit{Principles of mathematical analysis}, 3rd edition, McGraw Hill, New York,
    1976.

\bibitem{St01}
    R. Stanley, \textit{Enumerative Combinatorics, Volume 2}, 1st
    edition, Cambridge University Press, 2001.

\end{thebibliography}
\end{document}